\newtheorem{thm}{Theorem}[section]
\newtheorem{prop}[thm]{Proposition}
\theoremstyle{definition}
\newtheorem{example}[thm]{Example}
\newtheorem{defn}[thm]{Definition}
\theoremstyle{remark}
\newtheorem{rem}[thm]{Remark}
\numberwithin{equation}{section}
\newcommand{\be}{\begin{equation}}
\newcommand{\ee}{\end{equation}}
\renewcommand{\H}{\mathcal{H}}
\newcommand{\R}{\mathbb R}
\newcommand{\1}{\mathbbm 1}
\DeclareMathOperator{\supp}{supp}
\newcommand{\pb}{\{\,\cdot\,,\,\cdot\,\}}
\newcommand{\todo}[1]
  {\vspace{5 mm}\par \noindent \marginpar{{\LARGE$\skull$}} \framebox{\begin%!!!
  {minipage}[c]{0.95 \textwidth} \tt #1
\end{minipage}}\vspace{5 mm}\par}
\begin{document}
   \title{Queer Poisson brackets}
   \author{Daniel Beltita}
\email{beltita@gmail.com}
\address{\small Institute of Mathematics ``S. Stoilow'' of the Romanian Academy, 21 Calea Grivitei Street, 010702 Bucharest, Romania}
\author{Tomasz Goli\'nski}
\email{tomaszg@math.uwb.edu.pl}
\address{\small University in Bia{\l}ystok, Institute of Mathematics, Cio{\l}kowskiego 1M, 15-245~Bia{\l}ystok, Poland}
\author{Alice-Barbara Tumpach}
\email{Barbara.Tumpach@math.univ-lille1.fr}
\address{\small Universit\'e de Lille, Laboratoire Painlev\'e, CNRS U.M.R. 8524, 59 655 Villeneuve d'Ascq Cedex, France}

\begin{abstract}
We give a method to construct Poisson brackets $\pb$ on Banach manifolds~$M$, for which the value of $\{f,g\}$ at some point $m\in M$ may depend on higher order derivatives of the smooth functions $f,g\colon M\to{\mathbb R}$, and not only on the first-order derivatives, as it is the case on all finite-dimensional manifolds. We discuss specific examples in this connection, as well as the impact on the earlier research on Poisson geometry of Banach manifolds. 
Those brackets are counterexamples to 
the claim that the Leibniz property for any Poisson bracket on a Banach manifold would imply the existence of a Poisson tensor for that bracket.
\end{abstract}

\maketitle
%  \tableofcontents
 
\section{Introduction}

The Poisson brackets in infinite-dimensional setting have played for a long time a significant role in various areas of mathematics including mechanics (both classical and quantum) and integrable systems theory (see e.g. \cite{fadeev,bona,ratiu-mta3,marsden-chernoff}). However the rigorous approach to the notion of Poisson manifold in the context of Banach space is relatively recent (see \cite{OR}). It is known that the Poisson brackets on infinite-dimensional manifolds lack some of the properties known from the finite-dimensional case.
It was shown for instance in \cite{OR} that the existence of Hamiltonian vector fields requires an additional condition on the Poisson tensor in the case of manifolds modelled on a non-reflexive Banach space (i.e. a Banach space $E$ that is not canonically isomorphic to its second dual $E\varsubsetneq E^{**}$, where $E^*$ denotes the topological dual of a Banach space). Another example of a new behaviour can be found in \cite{dito} --- a Poisson bracket defined only on a certain space of smooth functions might lead to an unbounded Poisson tensor. 
% Other peculiar aspects of brackets in infinite dimensions follow from the discussion in \cite{pelletier}. 
%Moreover for manifolds which are not smoothly regular (i.e. without smooth bump functions), Poisson bracket need not be local although as far as we know a counterexample is not known yet, see \cite{pelletier}.
Moreover on some manifolds, Poisson brackets need not be local although as far as we know a counterexample is not known yet, see a related discussion in \cite{pelletier}.

The aim of this paper is to prove by example still another phenomenon that is specific to Poisson geometry on an infinite dimensional manifold $M$, namely the existence of Poisson brackets of higher order. That is, Leibniz property does not ensure that the bracket depends only on the first-order derivatives of functions.
The constructed Poisson brackets serve as a counterexample to the statements given in the literature (see \cite{OR} or subsequently \cite{ida2011}), where it was claimed that
the existence of a Poisson tensor $\Pi$ follows from Leibniz property and skew symmetry of the Poisson bracket $\{\cdot,\cdot\}$,
%demonstrate that, in the infinite-dimensional setting, contrary to the arguments given in the literature (see e.g. \cite{OR}), the Leibniz property does not imply that the value of the Poisson bracket depends only
%on the differentials of functions, 
%which in turn together with skew-symmetry would imply 
in particular for every $m\in M$ one could find a bounded bilinear functional $\Pi_m\colon T^*_mM\times T^*_mM\to\R$ satisfying
%as a section of the bundle $\bigwedge^2 T^{**}M$
%satisfying the condition
$$ %\label{P-tensor} 
\{f, g\}(m) = \Pi_m(f'_m,g'_m)
%\Pi_m (df(m), dg(m))
$$
% where $\bigwedge^2 T^{**}M$ is the bundle of skew-symmetric bilinear functions on the fibers of $T^*M$.
where $f'_m,g'_m\in T^*_mM$ are the differentials of $f,g\in C^\infty(M)$ at point $m\in M$. 
There is a  related fact in 
%This claim have roots in 
\cite[Thm. 4.2.16]{ratiu-mta3}, but we show that it is not applicable here (see Proposition~\ref{no_extension}).

We prove that there exist 
%the existence of 
Poisson brackets not given by Poisson tensors on the family of Banach sequence spaces $l^p$ for $1\leq p\leq2$ and present an explicit 
example for $p=2$.
Such Poisson brackets do not allow to introduce the dynamics by Hamilton equations in the usual way, thus from the point of view of applications in physics
one should explicitly assume the existence of Poisson tensor in the definition of a Poisson Banach manifold.
% that is slightly different from the one in \cite{OR}.

%Note that in the case of non-local Poisson brackets on not smoothly regular manifold $M$, the usual construction of Poisson tensor does not work in general even for Poisson brackets of order 1 as the differentials of globally defined functions at a given point $m$ might not span the whole $T^*_mM$.
%\todo{check}

In section \ref{section1} we investigate "queer operational tangent vectors", that is 
derivations on spaces of smooth functions on the manifold which are differential operators of order higher than 1. This notion was introduced with several results on their existence (including the examples on the Hilbert space) in \cite{michor}. We explore the case of queer vectors of order 2 on the family of Banach sequence spaces $l^p$ for $1\leq p<\infty$.

%The example of Poisson bracket not given by a Poisson tensor is obtained using the ``queer operational tangent vectors'' described in \cite{michor}. Those vectors are given as derivations on the space of functions but cannot be obtained as classes of equivalence of curves on the manifold.
% As shown there, there exist derivations of order higher than 1 on $C^\infty(\H)$, where $\H$ is an infinite-dimensional separable Hilbert space. 
%We give a Banach space version of that construction in section~\ref{section1} along with a concrete realization of it. 
% In section \ref{section1} 

Section \ref{section2} contains our main result, which shows a way to construct higher order Poisson brackets out of queer vector fields, and we illustrate the general result by a specific example on the Hilbert space. We conclude the paper with a version of the definition of Banach Poisson manifold which clarifies the one introduced in \cite{OR}. Some discussion on the problem of localization of Poisson bracket is also included.

% Before we continue with the discussion of our counterexamples of Poisson brackets, it is worth pointing out the reference 
% \cite[Thm. 4.2.16]{ratiu-mta3}, where
% the authors prove that for manifolds $M$ modelled on Banach spaces with norm smooth away from the origin, a certain space of derivations is isomorphic to the vector space of vector fields on $M$. However that result is not applicable in the study of Poisson brackets on Banach manifolds.
% Indeed in this reference, a derivation is a collection of maps $C^\infty(M,F)\to C^\infty(M,F)$ for all Banach spaces $F$ (see \cite[Prop. 4.2.9]{ratiu-mta3}), like the Lie derivative for instance. 
% %In particular, the proof of \cite[Thm. 4.2.16]{ratiu-mta3} relies on the fact that the derivation can be applied to a chart function. 
% As we will see in Proposition~\ref{no_extension}, a derivation on the space of real smooth functions 
% on an infinite-dimensional manifold may not extend to a map acting on smooth functions with values in an arbitrary Banach space in such a way that the Leibniz rule still would be satisfied.

% In section~\ref{section2}, we use this queer operational vector field to  construct a Poisson bracket which cannot be expressed by formula \eqref{P-tensor-point} (see Example~\ref{queer_Poisson}).

All Banach and Hilbert spaces considered in this paper are real. By manifold we will always mean a smooth real manifold modelled on a Banach space.

\section{Queer operational vector fields}\label{section1}

% \subsection{Kriegl--Michor example of operational tangent vector of order 2}

There are two major approaches to tangent vectors, namely the kinematic one and the operational one. These approaches lead to the same notion for finite-dimensional manifolds, but this is no longer the case in infinite dimensions. 
A kinematic tangent vector to a Banach manifold $M$ at a point $m\in M$ is an equivalence class of curves passing through that point (for precise definition see e.g.~\cite{ratiu-mta3}). 
On the other hand, an operational tangent vector is defined as a derivation acting in the space of germs of functions (see \cite{michor}, \cite{pelletier}).

For any $m\in M$ consider the set of all functions $f\colon U\to \R$ defined on an open neighborhood $U$ of $m$. One defines an equivalence
relation in that set in the following way: two functions $f_1\colon U_1\to\R$ and $f_2\colon U_2\to\R$ are equivalent if there exists an open neighborhood
$U\subset U_1\cap U_2$ of $m$ for which the restrictions of $f_1$ and $f_2$ to $U$ coincide. 
Any equivalence class %$[f]$ 
defined in this way is called a germ at the point $m\in M$. We denote the set of germs of all smooth functions at $m$ by $C^\infty_m(M)$. 
We note that the value and the derivatives of germs at $m\in M$ (that is, jets of germs) are well defined.
%We will drop the brackets in the notation of germ when there is no risk of confusion.
% Recall that the germ of a function $f:U\to\R$, $U\subset M$, at the point $m\in U$ is the equivalence class of functions which coincide with $f$ on some open neighborhood of $m$. 

We denote by  $L_k(T_m M;\R)$ the Banach space of bounded $k$-linear functionals on $T_mM$ with values in $\R$
and let $f^{(k)}_m\in L_k(T_m M;\R)$ be the $k$-th differential at the point $m\in M$ of a germ or a function. 
%function $f$ at $m\in M$.

\medskip

\begin{defn}
An \textbf{operational tangent vector} at point $m\in M$ is a linear map $\delta: C^\infty_m(M)\to \R$ satisfying  Leibniz rule~:
\be\label{leibniz} \delta (fg) = \delta f\; g(m) + f(m)\; \delta g.\ee 
For any open subset $U\subseteq M$ with $m\in U$ there is a canonical map $C^\infty(U)\to C^\infty_m(M)$ that takes every function on $U$ to its germ at $m$, 
hence one has a canonical pull-back of $\delta$ to $C^\infty(U)$, also denoted by $\delta$. 

An \textbf{operational vector field} on $M$ is a collection of maps $\delta_U: C^\infty(U)\to C^\infty(U)$ for each open set $U\subset M$, compatible with restrictions to open subsets and defining an operational tangent vector $\delta_m$ at every $m\in M$.
% i.e. a derivation of the algebra $C^\infty(M)$.
\end{defn}

\medskip

%Let us denote by  $L_k(T_m M;\R)$ the space of bounded $k$-linear maps on $T_mM$ with values in $\R$ andlet $f^{(k)}_m\in L_k(T_m M;\R)$ be the $k$-th differential of a germ $f\in C^\infty_m(M)$. 
%function $f$ at $m\in M$.

\begin{defn}
The operational tangent vector $\delta$ is of {\bf order $n$} if it can be expressed in the form
\be \label{order} \delta f = \sum_{k=1}^n 
%\ell_k(d^k(f)(m)),
\ell_k(f^{(k)}_m),
\ee
where $\ell_k:L_k(T_m M;\R)\to \R$ are continuous and linear. Moreover we require that $\ell_n$ does not vanish identically on the subspace of symmetric $n$-linear maps in $L_k(T_m M;\R)$. 
Otherwise the order of $\delta$ is infinite.
%If only one term is present in the sum \eqref{order}, $\delta$ is called homogeneous. 
The operational tangent vectors of order at least $2$ are called {\bf queer}.

The operational vector field $\delta$ is of {\bf order at most $n$} if there exists a family of smooth sections 
$\ell_k$ of the bundle $\bigsqcup\limits_{m\in M}(L_k(T_mM;\R))^*$ satisfying \eqref{order} at each $m\in M$.
\end{defn}

\medskip

The Leibniz rule \eqref{leibniz} satisfied by $\delta$ implies certain algebraic conditions on functionals $\ell_k$, see \cite[28.2]{michor}.

By definition, operational tangent vectors of order $n$ depend only on the $n^{th}$ jet of functions. The existence of infinite order operational tangent vectors is an open problem as far as we know.

\medskip

\begin{rem}\label{rem:order1}
Any kinematic tangent vector defines an operational tangent vector of order 1. On the other hand in the case of manifolds modelled on non-reflexive Banach spaces, operational tangent vectors of order $1$ are given by elements of $T^{**}M$ which is larger than the (kinematic) tangent bundle $TM$.
Thus in the case of Banach manifolds (even the ones having a global chart, as for instance Banach spaces), the notions of kinematic tangent vector and operational tangent vector do not coincide in general.
\end{rem}

\medskip

There are examples of Banach spaces possessing queer operational tangent vectors even in the reflexive case. 
A construction of %homogeneous 
second order operational tangent vectors on Hilbert spaces was given in \cite{michor} and we explore it below for a class of Banach spaces. 
Let $E$ be a Banach space and 
%let's consider a non-zero functional $\ell:L_2(E;\R)\to \R$. 
consider the natural inclusion of $E^*\times E^*$ into $L_2(E;\R)$ by~:
\begin{equation}\label{inclusion_rank_one}
\begin{array}{lll}
E^*\times E^*& \rightarrow & L_2(E;\R)\\
(f, g) & \mapsto & \left(f\otimes g~:(v,w)\mapsto f(v)g(w)\right).
\end{array}
\end{equation}
In general (contrary to the finite-dimensional case) the linear span of its image may not be dense. 
 A  functional $\ell\in(L_2(E;\R))^*$ defines an operational tangent vector of order $2$ at any $a\in E$ by
 \be \label{op_ell} 
 \delta_\ell f = 
 \ell(f''_a)
 %\ell(d^2(f)(a))
 \ee
 if and only if it  vanishes on $E^*\times E^*$ regarded as a subspace of $L_2(E;\R)$ via \eqref{inclusion_rank_one}. 
% Leibniz rule then follows directly from vanishing of $\ell$ on $E^*\times E^*$ (see \cite{michor}). 
We also recall here that we can identify $L_2(E;\R)$ with  $L(E;E^*)$.

\medskip

\begin{prop}\label{prop:lp}
There are no operational tangent vectors of the second order on the Banach space $l^p$ of $p$-summable sequences for $2<p<\infty$. On the other hand, if $1\leq p\leq 2$ there are non-trivial operational tangent vectors of the second order. 
\end{prop}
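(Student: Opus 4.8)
The plan is to analyze the space $L_2(l^p;\R)$, or equivalently $L(l^p;(l^p)^*) = L(l^p;l^q)$ with $1/p+1/q=1$, together with the closed subspace $S$ generated by the rank-one forms $f\otimes g$ coming from the inclusion \eqref{inclusion_rank_one}. By the characterization stated just before the proposition, a second-order operational tangent vector at a point $a\in l^p$ exists if and only if there is a nonzero $\ell\in(L_2(l^p;\R))^*$ vanishing on $S$; by Hahn--Banach this is equivalent to the span of $\{f\otimes g : f,g\in(l^p)^*\}$ \emph{not} being dense in $L_2(l^p;\R)$. So the whole proposition reduces to deciding, for each $p$, whether that span is dense. (One should also check the nondegeneracy clause of the definition, namely that $\ell_2$ does not vanish on symmetric forms — but $f\otimes f$ is symmetric, so any $\ell$ annihilating all of $S$ that is nonzero is automatically nonzero on some symmetric form by polarization, or one simply exhibits the explicit $\ell$ below as symmetric-nontrivial.)

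For the positive direction, $1\le p\le 2$, I would produce an explicit nonzero functional on $L_2(l^p;\R) \cong L(l^p;l^q)$ that kills all rank-one forms. The natural candidate is a ``trace''-type functional: for a bounded operator $T\colon l^p\to l^q$ with matrix $(t_{ij})$, set $\ell(T) = \sum_i t_{ii}$, provided this converges for all such $T$ and is continuous. The point is that for $1\le p\le 2$ one has $q\ge 2\ge p$, and a bounded operator $l^p\to l^q$ in this range has summable diagonal — this is exactly the kind of statement that follows from the theory of absolutely summing / $p$-summing operators (e.g. every bounded operator from $l^1$ to $l^2$ is $1$-summing, hence nuclear, by Grothendieck's theorem, and more generally the relevant inclusion maps factor appropriately); I would invoke the cleanest available form of this. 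On a rank-one operator $v\mapsto g(v)w$ with $g=(g_j)\in l^q$... wait — $g\in(l^p)^*=l^q$ and $w\in l^q$ gives matrix $t_{ij}=w_i g_j$, so $\ell = \sum_i w_i g_i$, which need not vanish. So instead the functional should be evaluation against a fixed off-diagonal pattern, or better: observe that the rank-one forms $f\otimes g$ viewed inside $L_2(l^p;\R)$ span the ideal of nuclear (trace-class-type) operators, and for $1\le p\le 2$ this ideal is a \emph{proper} closed subspace of $L(l^p;l^q)$ because there exist bounded non-nuclear operators $l^p\to l^q$ (e.g. the formal identity $l^p\hookrightarrow l^q$ when $p<q$, or a suitable diagonal operator when $p=q=2$); a functional separating such an operator from the nuclear part then gives the desired $\ell$. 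I would pick the concrete model and write $\ell$ down explicitly, especially in the $p=2$ Hilbert case where $L_2=$ Hilbert--Schmidt-ish and one can use a genuine unbounded-diagonal trick.

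For the negative direction, $2<p<\infty$, I need that the rank-one forms span a dense subspace of $L_2(l^p;\R)\cong L(l^p;l^q)$ with now $q<2<p$. Equivalently, every bounded operator $T\colon l^p\to l^q$ is a limit in operator norm of finite-rank operators, i.e. $L(l^p;l^q)$ coincides with the closure of the finite-rank operators — this is precisely the approximation property together with the statement that there are no ``exotic'' functionals. Here the key classical input is that for $q<2<p$ every bounded operator from $l^p$ to $l^q$ is compact (a Pitt-type theorem: bounded operators $l^p\to l^q$ for $q<p$ are compact), and since $l^q$ has a basis (the approximation property), compact operators into $l^q$ are norm-limits of finite-rank ones, each of which lies in the span of rank-one forms $f\otimes g$. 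Hence the span is dense, no nonzero annihilating $\ell$ exists, and there are no second-order operational tangent vectors.

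The main obstacle I anticipate is the positive case: one must pin down the precise functional-analytic fact that makes the nuclear/summing ideal \emph{proper} in $L(l^p;l^q)$ for all $1\le p\le 2$ simultaneously (the boundary $p=1$ uses Grothendieck's theorem; $p=2$ is easy via a diagonal operator; the intermediate $1<p<2$ needs the right summing-operator or cotype argument), and to do so with a functional that is manifestly continuous on all of $L_2(l^p;\R)$ and manifestly zero on the rank-one cone. Getting the continuity estimate — bounding the chosen ``diagonal-type'' functional by the operator norm uniformly — is where the real work lies; the negative case is comparatively soft once Pitt's theorem and the approximation property of $l^q$ are cited.
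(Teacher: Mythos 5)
Your reduction to the density question for the span of the rank-one forms, and your treatment of the case $2<p<\infty$ (Pitt's theorem plus the approximation property of $l^q=(l^p)^*$, so that the closed span of the $f\otimes g$ is exactly the compact operators and hence all of $L(l^p;l^q)$), is precisely the paper's argument. The issue is in the positive direction $1\le p\le 2$, where you assert that the rank-one forms span ``the ideal of nuclear operators'' and that it therefore suffices to exhibit a bounded \emph{non-nuclear} operator. That identification is wrong in the relevant topology: the closed linear span of the rank-one forms in the \emph{operator norm} of $L_2(l^p;\R)\cong L(l^p;(l^p)^*)$ is the space of compact operators (given the approximation property), not the nuclear ones --- nuclear operators are dense in the compacts in operator norm, so non-nuclearity of a compact operator gives you nothing. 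The property you must verify of your witness is \emph{non-compactness}. Your chosen witnesses (the formal identity $l^p\hookrightarrow l^q$ for $p<q$, the identity on $l^2$) do happen to be non-compact, so the separation argument goes through, but the justification as written would not: a bounded non-nuclear compact operator lies in the closed span and cannot be separated from it.

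Once this is corrected, all the machinery you worry about (Grothendieck's theorem, $p$-summing operators, cotype) evaporates. The paper's entire positive direction is the elementary observation that for $1\le p\le 2$ one has $q=p/(p-1)\ge 2\ge p$, so the formal inclusion $\iota\colon l^p\hookrightarrow l^q=(l^p)^*$ is bounded but not compact (the images of the standard basis vectors are uniformly separated in $l^q$), hence $\iota$ lies outside the closed span of the rank-one forms, and Hahn--Banach provides a continuous $\ell$ vanishing on that span with $\ell(\iota)=1$. Since $\iota$ corresponds to the symmetric bilinear form $(v,w)\mapsto\sum_i v_iw_i$, this same choice settles the nondegeneracy-on-symmetric-forms clause directly; your polarization remark does not quite work as stated (an $\ell$ vanishing on the rank-one span kills only the \emph{finite-rank} antisymmetric forms, so you cannot conclude it is nonzero on a symmetric form without pointing at a specific symmetric witness), but the explicit $\iota$ makes the point moot.
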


\begin{proof}
The proof of existence of operational tangent vectors of the second order has common idea with \cite[Rem. 28.8]{michor}. Namely
it is equivalent to the existence of a nonzero continuous linear functional $\ell$ that vanishes on  $(l^p)^*\times (l^p)^*$.

According to Pitt's theorem, every map from $l^p$ to $(l^p)^*$ is compact if $2<p<\infty$, see e.g. \cite{pitt1936}, \cite[Thm. 4.23]{ryan2002}, \cite[Prop 6.25]{fabian2001}. Moreover since all $(l^p)^*$ spaces have the approximation property, the closure of linear span of $(l^p)^*\times (l^p)^*$ coincides with the space of compact operators from $l^p$ to $(l^p)^*$ \cite[Ch. 4]{ryan2002}. So, the only continuous functional $\ell$ which would vanish on $E^*\times E^*$ is the zero functional.
Thus there are no non-zero operational tangent vectors of the second order on $l^p$ for $2<p<\infty$. 

In the case $1\leq p\leq 2$, the inclusion map $\iota:l^p\hookrightarrow (l^p)^*$ is not compact, so using Hahn--Banach theorem it is possible to define 
a non-zero functional $\ell$ on $L_2(E;\R)$ that vanishes on the image of the map~\eqref{inclusion_rank_one}. This
implies the existence of non-zero operational tangent vectors of the second order on $l^p$ for $1\leq p\leq 2$.
\end{proof}

\medskip 

In particular for $p=2$ we obtain an operational tangent vector of the second order on the separable Hilbert space $\H$. We will present this case more explicitly.

\medskip 

\begin{example}[concrete queer operational vector on a Hilbert space]\label{ex:ell}$\;$
The Banach space 
$L_2(\H;\R)$ can be identified with the Banach space of bounded operators $L^\infty(\H)$.
This identification maps a bilinear map $B$ to the operator $A$ defined  by 
\begin{equation}\label{bilinear_to_operators}
B(v, w) = \langle Av, w\rangle
\end{equation}
using Riesz theorem.
%\begin{equation}\label{bilinear_to_operators}
%\begin{array}{lll}
%L_2(\H;\R) & \rightarrow & L^\infty(\H_{\mathbb{R}})\\
%B & \mapsto & \left(A~: v \mapsto B(v, \cdot)\right),
%\end{array}
%\end{equation}
% Denote by $(\H_{\R})^*$ the space of real continuous functionals on $\H$.
The closure of the linear span of $\H^*\times \H^*$ considered as a subspace of $L_2(\H;\R)\simeq L^\infty(\H)$ by inclusion~\eqref{inclusion_rank_one} is the ideal of compact operators on $\H$. 
%It is now possible to 
One can now 
obtain the continuous functional $\ell$ with required properties by putting e.g. $\ell(\1)=1$ where $\1$ denotes the identity map, and $\ell(K) = 0$ for any compact operator $K\in L^\infty(\H)$ and extending it to the whole $L^\infty(\H)$ by means of Hahn--Banach theorem.

Let us now demonstrate explicitly that the operational tangent vector $\delta_\ell$ given by \eqref{op_ell} with $\ell$ defined as above is not a kinematic tangent vector. Without loss of generality we fix the point $a=0$.
Taking for example the function 
\be\label{rho}\rho(v)=\langle v, v \rangle\ee
for $v\in\H$, we get $\rho''_v = 2\, \1$, where we have used the identification $L_2(\H;\R) \simeq L^\infty(\H)$ given by \eqref{bilinear_to_operators}. From definition it follows that $\delta_\ell (\rho) = 2$. On the other hand, any kinematic tangent vector to $\H$ at $0$ can be identified with some $w\in\H$ and
$$w\cdot \rho= \langle w,  0 \rangle + \langle 0, w \rangle = 0.$$
Thus $\delta_\ell$ is in fact a queer tangent vector. 
One can extend $\delta_\ell$ to a queer constant operational vector field on $\H$, which we will denote by the same symbol.
% \flushright$\diamond$
\end{example}

\medskip

Let us note that \cite[Thm. 4.2.16]{ratiu-mta3}
states that for manifolds $M$ modelled on Banach spaces with norm smooth away from the origin, a certain space of derivations is isomorphic to the vector space of kinematic vector fields on $M$. 
In this reference, a derivation $\mathbf{D}$ on the Banach manifold~$M$ is a collection of linear maps $C^\infty(M,F)\to C^\infty(M,F)$
for all Banach spaces $F$, such that for any 
$f\in C^\infty(M,F)$, $g\in C^\infty(M,G)$, and any bilinear map 
$B~: F\times G\rightarrow H$, the following Leibniz rule holds
\begin{equation}\label{extended_leibniz}
\mathbf{D}\left(B(f, g)\right) = B(\mathbf{D}f, g) + B(f, \mathbf{D}g),
\end{equation}
where $F$, $G$, and $H$ are Banach spaces.
An example of such a derivation is the Lie derivative. 
Let us show that existence of $\delta_\ell$ in Example \ref{ex:ell} is not a contradiction with this result.
Namely the operational vector field $\delta_\ell$ cannot be extended to a derivation in the sense of \cite{ratiu-mta3}. 

\medskip

\begin{prop}\label{no_extension}
The queer operational vector field $\delta_\ell$ constructed in Example~\ref{ex:ell} cannot be extended to a derivation on all $C^\infty(\H, F)$ spaces, where $F$ is any Banach space.
\end{prop}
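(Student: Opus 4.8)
The plan is to derive a contradiction straight from the extended Leibniz rule \eqref{extended_leibniz}, applied to the very function $\rho$ of Example~\ref{ex:ell}. Suppose, towards a contradiction, that $\delta_\ell$ extends to a derivation $\mathbf{D}$ in the sense of \cite{ratiu-mta3}, i.e. a family of linear maps $\mathbf{D}\colon C^\infty(\H,F)\to C^\infty(\H,F)$ satisfying \eqref{extended_leibniz} for all Banach spaces $F,G,H$ and all bounded bilinear $B\colon F\times G\to H$, and whose restriction to scalar functions $C^\infty(\H,\R)$ equals $\delta_\ell$. Taking $F=G=H=\R$ with $B$ the multiplication shows that this restriction automatically satisfies \eqref{leibniz} and is thus an operational vector field, so requiring it to coincide with $\delta_\ell$ is the natural meaning of ``$\mathbf{D}$ extends $\delta_\ell$''.

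The key observation is that $\rho$ factors as $\rho=B(\mathrm{id},\mathrm{id})$, where $\mathrm{id}\colon\H\to\H$ is the identity map --- a bounded linear, hence smooth, element of $C^\infty(\H,\H)$ --- and $B=\langle\,\cdot\,,\,\cdot\,\rangle\colon\H\times\H\to\R$ is the inner product, a bounded bilinear map. I would then apply \eqref{extended_leibniz} with $F=G=\H$, $H=\R$ to get
\[
\mathbf{D}\rho \;=\; B(\mathbf{D}\,\mathrm{id},\mathrm{id}) + B(\mathrm{id},\mathbf{D}\,\mathrm{id}),
\]
and evaluate both sides at the point $0\in\H$. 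Since $\mathrm{id}(0)=0$ and $B$ is bilinear, both terms on the right vanish at $0$, so $(\mathbf{D}\rho)(0)=0$. On the other hand, because $\mathbf{D}$ restricts to $\delta_\ell$ on $C^\infty(\H,\R)$ and, as computed in Example~\ref{ex:ell}, $\rho''_0=2\,\1$ with $\ell(\1)=1$, we have $(\mathbf{D}\rho)(0)=\delta_\ell(\rho)(0)=\ell(\rho''_0)=2$. This contradiction finishes the proof; note that only the action of $\mathbf{D}$ on $C^\infty(\H,\H)$ was used, so in fact $\delta_\ell$ does not even extend to a derivation on $\H$-valued functions.

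I do not anticipate a genuine obstacle: the computation is a one-liner once the factorization $\rho=B(\mathrm{id},\mathrm{id})$ is in place. The only points deserving a sentence of care are (i) pinning down the meaning of ``extension'' --- that the scalar-valued restriction of $\mathbf{D}$ must coincide with $\delta_\ell$ --- and (ii) checking that the ingredients used meet the hypotheses of \eqref{extended_leibniz}, namely that $\mathrm{id}\in C^\infty(\H,\H)$ lies in the domain of $\mathbf{D}$ and that the inner product is a bounded bilinear map, both of which are clear. Conceptually this is exactly the mechanism behind \cite[Thm. 4.2.16]{ratiu-mta3}: a genuine \cite{ratiu-mta3}-derivation is forced to be of first order because any second-order, $\rho$-type contribution is annihilated by writing the function as a bilinear expression in maps vanishing at the base point.
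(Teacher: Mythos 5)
Your proof is correct and is essentially the paper's own argument: both derive the contradiction by writing $\rho$ as a bilinear expression in maps vanishing at $0$ and comparing $(\mathbf{D}\rho)(0)=2$ with the vanishing of the right-hand side of \eqref{extended_leibniz}. The only cosmetic difference is that you take $B$ to be the inner product on $\H\times\H$ with both arguments the identity, whereas the paper uses the duality pairing $\H^*\times\H\to\R$ with $f(v)=\langle v,\cdot\rangle$ and $g=\mathrm{id}$; these are interchangeable via the Riesz isomorphism.
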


\begin{proof}
Let us assume that there exists an extension $\mathbf{D}_\ell$ of $\delta_\ell$.
%Denote by $\H$ an infinite-dimensional Hilbert space. 
Let 
%$F$ be the dual space $\H^*$ of $\H$, $G = \H$, and 
$B$ be the natural duality pairing between $\H^*$ and $\H$. Consider the maps $f~: \H \rightarrow \H^*$, $v \mapsto \langle v, \cdot\rangle$ and $g$ equal to the identity map on $\H$. Then $B(f, g)(v)= \langle v, v\rangle = \rho(v)$, and 
$$
\mathbf{D}_\ell\big(B(f, g)\big)(v) = \delta_\ell(\rho)(v) = \ell(2\,\1) = 2.
$$ 
%The function $\mathbf{D}_\ell\big(B(f, g)\big)$ satisfies $\mathbf{D}_\ell\big(B(f, g)\big)(0) = 2$.
On the other hand, 
$$
B(\mathbf{D}_\ell f, g)(v) + B(f, \mathbf{D}_\ell g)(v) = B(\mathbf{D}_\ell f(v), v) + \langle v, \mathbf{D}_\ell g(v)\rangle.
$$
This expression vanishes for $v=0$, hence \eqref{extended_leibniz} cannot be satisfied for any extension of $\delta_\ell$.
\end{proof}

% Following \cite[Lemma 32.6]{michor} one can show that only operational vector fields of order $1$ can be extended to a collection of maps $D:C^\infty(M,F)\to C^\infty(M,F^{**})$ satisfying \eqref{extended_leibniz}.
% % an $n$-linear analogue of 

\medskip

\begin{prop}
 Let $\delta$ be an operational vector field of finite order on a manifold $M$. Then the set of points at which it is queer is open while the set of points at which it is kinematic is closed in $M$.
\end{prop}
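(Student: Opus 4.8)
The plan is to work locally in a chart around an arbitrary point and express the vector field through its coefficient functions $\ell_k$, which by hypothesis are smooth sections of the appropriate bundles. The key observation is that being queer at $m$ means precisely that $\ell_n(m)$ does not vanish identically on the symmetric $n$-linear maps, where $n$ is the order of the field; while being kinematic at $m$ means the field has order (at most) $1$ at that point, i.e. $\ell_k(m)=0$ for all $k\geq 2$. So I want to show that the map $m\mapsto \ell_n(m)\big|_{\mathrm{sym}}$ is continuous (indeed smooth), hence the set where it is nonzero is open, which gives the first claim; and that the set where all higher-order coefficients vanish is closed, which gives the second.

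First I would fix $m_0\in M$ and a chart identifying a neighborhood with an open set in the model Banach space $E$, so that $T_xM\cong E$ for all nearby $x$, and the bundles $\bigsqcup_x (L_k(T_xM;\R))^*$ become trivial with fiber $(L_k(E;\R))^*$. The smoothness hypothesis in the definition of "order at most $n$" gives smooth maps $x\mapsto \ell_k(x)\in (L_k(E;\R))^*$. Next I would test against fixed elements: for any symmetric $k$-linear $T\in L_k(E;\R)$, the function $x\mapsto \ell_k(x)(T)$ is continuous (it is the composition of the smooth section with the bounded evaluation functional). Since being queer at $x$ is the statement that $\ell_n(x)$ restricted to symmetric tensors is nonzero — equivalently $\ell_n(x)(T)\neq 0$ for some symmetric $T$ — the queer locus is a union over symmetric $T$ of the open sets $\{x : \ell_n(x)(T)\neq 0\}$, hence open. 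A subtlety to address: one needs a well-defined global order $n$. Here I would take $n$ to be the (finite) order of $\delta$ as a field, fixed once and for all; the point at which the order of $\delta_x$ strictly drops below $n$ but stays $\geq 2$ is still queer, and the argument above only needs $\ell_n(x)\neq 0$ on symmetric tensors at the points we call queer, which is exactly the definition — so openness of the queer locus follows as stated, and I should phrase it carefully so as not to conflate the global order with the pointwise order.

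For the closed/kinematic claim, the kinematic locus is $\{x : \ell_k(x)=0 \text{ for all } 2\leq k\leq n\}$. For each fixed $k$ and each fixed $T\in L_k(E;\R)$ the set $\{x:\ell_k(x)(T)=0\}$ is closed by continuity of $x\mapsto\ell_k(x)(T)$; intersecting over all $k$ from $2$ to $n$ and all $T$ gives a closed set. This intersection is exactly the set where $\ell_k(x)=0$ as a functional for each $k\geq 2$, i.e. where $\delta_x$ has order $\leq 1$, i.e. where $\delta_x$ is kinematic (using Remark~\ref{rem:order1} and the identification of order-$1$ operational vectors with elements of $T^{**}M$; in the non-reflexive case "kinematic" should be read as "of order at most $1$", and I would add a remark clarifying this is what is meant). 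The two claims are complementary only up to the locus where the field is queer of order between $2$ and $n$ versus order exactly $1$, so neither statement is simply the negation of the other, and I would present them as two separate topological facts rather than as a single dichotomy.

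The main obstacle is not the topology, which is routine, but making the hypotheses precise: one must ensure the coefficients $\ell_k$ are genuinely globally defined smooth sections (this is exactly the definition of "operational vector field of finite order" used here, so it is available), and one must pin down what "kinematic at a point" means for a possibly-non-reflexive model space — I expect to spend a sentence or two isolating the statement to "$\delta_x$ coincides with an element of $T_x^{**}M$", which is the order-$\leq 1$ condition, so that the proof reduces cleanly to the vanishing of the higher coefficients. Once that is settled, the argument is: trivialize the jet bundles in a chart, use that evaluation at a fixed tensor is a bounded functional so composition with the smooth section is continuous, and read off openness of the queer locus and closedness of the kinematic locus from the elementary facts that a union of open sets is open and an intersection of closed sets is closed.
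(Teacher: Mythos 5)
Your overall strategy --- trivialize locally, use smoothness/continuity of the coefficient sections $\ell_k$, and deduce openness and closedness from evaluations against fixed tensors --- is the same as the paper's, but there are two concrete problems. First, your description of the queer locus as $\bigcup_{T\ \mathrm{sym}}\{x:\ell_n(x)(T)\neq 0\}$ is not correct: at a point where the pointwise order drops from $n$ to, say, $2$, one has $\ell_n(x)=0$ on all symmetric $n$-linear maps and yet the point is still queer because $\ell_2(x)\neq 0$ there. You flag this subtlety yourself but then resolve it the wrong way by asserting that queerness only involves $\ell_n$. The repair is immediate: the \emph{non-queer} locus is $\bigcap_{k=2}^n\{x:\ell_k(x)=0 \text{ on symmetric } k\text{-linear maps}\}$, which is closed by exactly the evaluation argument you give for the second claim, and the queer locus is its complement, hence open. (This is how the paper argues.)

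Second, for the kinematic claim you propose to read ``kinematic'' as ``of order at most $1$'', which changes the statement rather than proving it. On a non-reflexive model space an operational tangent vector of order $1$ is an element of $T^{**}_xM$ that need not lie in $T_xM$ (Remark~\ref{rem:order1}), so the kinematic locus is $\bigcap_{k=2}^n\ell_k^{-1}(0)\cap\ell_1^{-1}(TM)$, and you must additionally show that the condition $\ell_1(x)\in T_xM$ cuts out a closed set. This is the one genuinely new ingredient the paper's proof contains: the canonical embedding $E\hookrightarrow E^{**}$ is isometric, so its image is complete and therefore closed, whence $TM$ is a fiberwise closed subset of $T^{**}M$ and $\ell_1^{-1}(TM)$ is closed in a local trivialization. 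With these two repairs your argument coincides with the paper's proof.
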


\begin{proof}
Let $n$ be the order of $\delta$. The set of points at which $\delta$ is not queer is the intersection
$\bigcap\limits_{k=2}^n \ell_k^{-1}(0)$ of level sets of zero sections of coefficients $\ell_k:M\to\bigsqcup\limits_{m\in M}(L_k(T_mM;\R))^*$ of $\delta$. 
Since functionals $\ell_k$ are continuous, the above intersection is a closed set. 

The set of points at which $\delta$ is kinematic is 
%the intersection  
$\bigcap\limits_{k=2}^n \ell_k^{-1}(0) \cap \ell_1^{-1}(TM)$, where
we regard
%consider 
$TM$ as a subbundle of $\bigsqcup\limits_{m\in M}(L_1(T_mM;\R))^* =  T^{**}M$. It is straightforward to check that $TM$ is a closed subset of $T^{**}M$ using local trivialization.
\end{proof}

\section{Queer Poisson brackets}\label{section2}

In this section we will construct 
%consider only 
Poisson brackets which are localizable in the sense of the following definition:

%\begin{defn}
%	A {\bf Poisson bracket} on a manifold $M$ is a family of bilinear maps $\pb_U\colon C^\infty(U)\times C^\infty(U)\to C^\infty(U)$ indexed by the open subsets $U\subseteq M$, satisfying 
%	\begin{enumerate}
%		\item[(i)] skew-symmetry: $\{f, g\}_U = -\{g, f\}_U;$
%		\item[(ii)] Jacobi identity: $\big\{\{f, g\}_U, h\big\}_U + \big\{\{g, h\}_U, f\big\}_U + \big\{\{h, f\}_U, g\big\}_U =  0$;
%		\item[(iii)] Leibniz rule: $\{f, gh\}_U = \{f, g\}_Uh + g\{f, h\}_U$
%	\end{enumerate}
%	and moreover compatible with restrictions, i.e. if $U\subseteq V$ and $f,g\in C^\infty(V)$ then
%	$\{f,g\}_V\vert_U=\{f\vert_U,g\vert_U\}_U$.
%	% A bilinear operation $\pb: C^\infty(M)\times C^\infty(M)
%	% \rightarrow C^\infty(M)$ is called a \textbf{Poisson bracket} on a manifold $M$ if it satisfies~:
%\end{defn}

\medskip

\begin{defn}
A {\bf Poisson bracket} on a manifold $M$ is 
a bilinear operation $\pb: C^\infty(M)\times C^\infty(M)
 \rightarrow C^\infty(M)$ satisfying 
 \begin{enumerate}
 	\item[(i)] skew-symmetry: $\{f, g\} = -\{g, f\};$
 	\item[(ii)] Jacobi identity: $\big\{\{f, g\}, h\big\} + \big\{\{g, h\}, f\big\} + \big\{\{h, f\}, g\big\} =  0$;
 	\item[(iii)] Leibniz rule: $\{f, gh\} = \{f, g\}h + g\{f, h\}$; 
 \end{enumerate}
for all $f,g,h\in C^\infty(M)$. 

A Poisson bracket $\pb$ on $M$ is called {\bf localizable} if it has 
a {\bf localization}, that is, 
a family consisting of a Poisson bracket $\pb_U$
on every open subset $U\subseteq M$, which satisfy  
$\pb_M=\pb$ and are compatible with restrictions, i.e., 
if $U\subseteq V$ and $f,g\in C^\infty(V)$ then
$\{f,g\}_V\vert_U=\{f\vert_U,g\vert_U\}_U$. 
%\medskip
If this is the case, then for any function $h\in C^\infty(M)$, 
its corresponding {\bf Hamiltonian vector field} is the operational vector field given by 
\be X_h(f)(m):= \{h\vert_U,f\}_U(m)\ee
for all $f\in C^\infty(U)$ and $m\in U$, for every open subset  $U\subseteq M$.
\end{defn}

\begin{rem}
A version of Peetre's theorem on a Banach space $E$ was proved in \cite{wells}
to the effect that if a linear map $T:C^\infty(E)\to C^\infty(E)$ is local in the sense that
%where $E$ Banach spaces, 
$\supp Tf\subset \supp f$ for all $f\in C^\infty(E)$, 
then $T$ is a differential operator of locally finite order provided that $E$ satisfies the condition of $B^\infty$ smoothness (existence of bump functions with Lipschitz property for all derivatives). This condition is satisfied e.g. for Hilbert spaces, but not for
the Banach space of real sequences that are convergent to zero. 
%To our knowledge there is neither a proof of this theorem for other Banach spaces nor a counterexample.

From compatibility with restrictions it follows that operational vector fields (including Hamiltonian vector fields) are local in this sense. Thus in the case of $B^\infty$ smooth Banach spaces they are differential operators of locally finite order.
\end{rem}

\medskip

In the following we denote by $\bigwedge^2 T^{**}M$ the bundle of skew-symmetric bilinear functions on the fibers of cotangent bundle $T^*M$ of 
a Banach manifold~$M$.

\medskip

\begin{defn}
A localizable Poisson bracket $\pb$ on $M$ is of {\bf order one} at $m\in M$ if 
there exists a skew-symmetric bounded bilinear functional $\Pi_m\colon T_m^{*}M\times T_m^{*}M\to{\mathbb R}$ with 
\be \label{P-tensor-point} 
\{f, g\}_U(m) = 
\Pi_m(f'_m,g'_m)
%\Pi_m (df(m), dg(m))
\ee
open neighborhoods $U$ of $m$ and all $f,g\in C^\infty(U)$. 
%for all $f,g\in C^\infty(M)$. 
%smooth functions $f,g$ defined on a neighborhood of $m$. 
Otherwise we say that $\pb$ is {\bf queer} at $m\in M$. 

If there exists a smooth section $\Pi$ of the bundle $\bigwedge^2 T^{**}M$ satisfying~\eqref{P-tensor-point} 
at every point $m\in M$, then we say that $\Pi$ is the {\bf Poisson tensor} of the Poisson bracket $\pb$. 
\end{defn}

\medskip

\begin{rem}\label{7january2018}
	In the above definition, if the Poisson bracket is of order one at some point $m\in M$ then 
	%it is not clear that 
	there exists only one functional $\Pi_m$ satisfying~\eqref{P-tensor-point},  as the differentials of
	locally defined functions at 
	%globally defined functions at 
	a given point $m$ 
	%might not 
	span the whole $T^*_mM$. 
	%However this is certainly the case if for instance Poisson bracket is localizable or $M$ has a global chart. 
\end{rem}

%Note that in the case of smoothly regular manifolds, the existence of bump functions implies that every Poisson bracket is localizable, i.e. the whole family of bilinear maps is uniquelly defined by $\pb_M$. Also conventional Poisson brackets are automatically localizable in this sense.

\medskip

\begin{thm}\label{thm}
Let $\delta_1$ and $\delta_2$ be two commuting operational vector fields on a Banach manifold~$M$, and define
$$\{f_1,f_2\}_U:=(\delta_1)_U(f_1)\; (\delta_2)_U(f_2)-(\delta_2)_U(f_1)\; (\delta_1)_U(f_2),$$
for all $f_1$, $f_2\in C^\infty(U)$, for every open subset $U\subseteq M$. 
Then $\pb:=\pb_M$ is a localizable Poisson bracket with a localization consisting of the brackets~$\pb_U$. 
If moreover $\delta_1$ and $\delta_2$ are linearly independent at some point $m\in M$, then the Poisson bracket $\pb$ is queer at the point $m$ if and only if at least one the operational vector field $\delta_1$ and $\delta_2$ is queer at $m$. 
\end{thm}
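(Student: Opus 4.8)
The plan is to verify the three Poisson axioms for $\pb_U$ directly and then analyze the order of the bracket at $m$. For the algebraic axioms, skew-symmetry of $\{f_1,f_2\}_U=(\delta_1)_U(f_1)(\delta_2)_U(f_2)-(\delta_2)_U(f_1)(\delta_1)_U(f_2)$ is immediate from the antisymmetric form of the definition. The Leibniz rule in the second slot follows by expanding $(\delta_i)_U(f_2 f_3)=(\delta_i)_U(f_2)f_3+f_2(\delta_i)_U(f_3)$ using that each $\delta_i$ is an operational vector field, and collecting terms. The Jacobi identity is the one computation that requires care: I would write $\{f_1,f_2\}_U$ as $(\delta_1 f_1)(\delta_2 f_2)-(\delta_2 f_1)(\delta_1 f_2)$, apply $\delta_1,\delta_2$ again via Leibniz, and form the cyclic sum; the terms involving second-order action of a single $\delta_i$ cancel in the cyclic sum by symmetry, and the mixed terms cancel precisely because $\delta_1$ and $\delta_2$ commute, i.e. $\delta_1\delta_2 f=\delta_2\delta_1 f$ on each $C^\infty(U)$. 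Compatibility with restrictions is inherited from the corresponding property of the operational vector fields $\delta_1,\delta_2$, so $\{\pb_U\}$ is genuinely a localization and $\pb=\pb_M$ is localizable.

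Next I would address the order statement. Assume $\delta_1,\delta_2$ are linearly independent at $m$. First suppose both are kinematic (order one) at $m$; then each $\delta_i$ at $m$ is given by pairing with a vector, i.e. $(\delta_i)_U(f)(m)=\langle v_i, f'_m\rangle$ for some $v_i\in T_mM$ (more precisely an element of $T^{**}_mM$, which still defines a bounded linear functional on $T^*_mM$). Hence $\{f,g\}_U(m)=\langle v_1,f'_m\rangle\langle v_2,g'_m\rangle-\langle v_2,f'_m\rangle\langle v_1,g'_m\rangle=(v_1\wedge v_2)(f'_m,g'_m)$, so $\Pi_m:=v_1\wedge v_2$ is a skew-symmetric bounded bilinear functional on $T^*_mM$ witnessing order one. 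This proves the ``if'' direction of the contrapositive: if $\pb$ is queer at $m$ then at least one $\delta_i$ is queer at $m$.

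For the converse, suppose without loss of generality $\delta_1$ is queer at $m$ but $\pb$ is order one at $m$, with Poisson tensor value $\Pi_m$. The idea is to produce a contradiction by evaluating $\{f,g\}_U(m)$ on cleverly chosen functions. Using that $\delta_2$ is linearly independent from $\delta_1$ at $m$, I would first pick a function $g_0$ with $(\delta_1)(g_0)(m)=0$ and $(\delta_2)(g_0)(m)=1$; such $g_0$ exists because linear independence of $\delta_1,\delta_2$ as functionals on the jet space means their joint kernel is a proper subspace. Then for every $f$, $\{f,g_0\}_U(m)=(\delta_1 f)(m)-0\cdot(\delta_2 f)(m)\cdot 0$... more carefully $\{f,g_0\}_U(m)=(\delta_1 f)(m)(\delta_2 g_0)(m)-(\delta_2 f)(m)(\delta_1 g_0)(m)=(\delta_1 f)(m)$. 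On the other hand $\{f,g_0\}_U(m)=\Pi_m(f'_m,(g_0)'_m)$ depends only on $f'_m$. Therefore $(\delta_1 f)(m)$ depends only on $f'_m$, i.e. $\delta_1$ is order one at $m$ — contradicting that $\delta_1$ is queer at $m$. The same argument with the roles of $\delta_1,\delta_2$ exchanged handles the case where $\delta_2$ is the queer one.

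The main obstacle is the construction of the auxiliary function $g_0$ with prescribed values $(\delta_1 g_0)(m)=0$, $(\delta_2 g_0)(m)=1$: I need that linear independence of $\delta_1,\delta_2$ \emph{at the point $m$} (as elements of the dual of the finite-jet space at $m$, which makes sense since both have finite order) guarantees surjectivity of $f\mapsto((\delta_1 f)(m),(\delta_2 f)(m))$ onto $\R^2$, and that the relevant jet of $f$ at $m$ can be realized by an actual smooth function on a neighborhood (e.g. a polynomial in a chart, cut off by a bump function, or simply a locally defined polynomial since we only need membership in $C^\infty(U)$ for small $U$). Once this realization lemma is in hand, the rest is the short computation above; I expect the Jacobi identity verification and this realization step to be where essentially all the work lies, the former being routine bookkeeping and the latter the only genuinely subtle point.
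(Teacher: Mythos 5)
Your proposal is correct and follows essentially the same route as the paper: the axioms are checked directly (Jacobi via commutativity of $\delta_1,\delta_2$), the non-queer direction uses $\Pi_m=(\delta_1)_m\wedge(\delta_2)_m$, and the queer direction evaluates $\{f,g_0\}(m)$ against an auxiliary function $g_0$ with $(\delta_1)_m(g_0)=0$, $(\delta_2)_m(g_0)\neq 0$ to conclude that $(\delta_1)_m$ factors through $f'_m$. The ``realization lemma'' you flag as the subtle point is actually automatic here, since linear independence of $(\delta_1)_m,(\delta_2)_m$ is taken as functionals on the space of germs $C^\infty_m(M)$, so the standard duality fact directly produces a germ (hence a locally defined smooth function) with the prescribed values, with no need to pass through jets.
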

% \begin{e-proposition}\label{4Nov2016}
% Consider two derivations $D_1$ and $D_2$ of $C^\infty(M)$,
% %Let $\Ac$ be any commutative associative $\R$-algebra 
% %and $D_1,D_2\in\Der(\Ac)$ with 
% such that $[D_1,D_2]=0$. 
% Define 
% $$\{f_1,f_2\}:=D_1(f_1)\cdot D_2(f_2)-D_2(f_1)\cdot D_1(f_2),$$
% for any $f_1$, $f_2\in C^\infty(M)$.
% Then $\{\cdot,\cdot\}\colon C^\infty(M)\times C^\infty(M)\to C^\infty(M)$ is a Poisson bracket. 
% \end{e-proposition}
% 
\begin{proof}
Bilinearity and skew-symmetry of $\pb$ are obvious. Jacobi identity follows from the commutativity of $\delta_1$ and $\delta_2$ just like in the case of canonical Poisson bracket on $\R^2$. This can also be seen e.g. as the special case $n=2$ of \cite[Prop. 2]{Filippov}. 
The Leibniz rule for $\pb$ follows easily from \eqref{leibniz}. Compatibility with restrictions follows from the definition of operational vector fields.

Now assume that $\delta_1$ and $\delta_2$ are linearly independent at $m\in M$. 
If none of $\delta_1$ and $\delta_2$ is queer at $m$, then it follows by Remark \ref{rem:order1}
that their values at $m$ satisfy $(\delta_1)_m,(\delta_2)_m\in T_m^{**}M$. 
Then \eqref{P-tensor-point} is satisfied if we define 
$\Pi_m\colon T_m^{*}M\times T_m^{*}M\to{\mathbb R}$ by 
$$\Pi_m(\mu,\nu)=(\delta_1)_m(\mu)\;(\delta_2)_m(\nu)-(\delta_2)_m(\mu)\;(\delta_1)_m(\nu)\text{ for all }\mu,\nu\in T_m^*M,$$
hence $\pb$ is not queer at $m\in M$.

Conversely, assume that  $\pb$ is not queer at $m\in M$, hence we have \eqref{P-tensor-point}. 
Since the linear functionals $(\delta_1)_m,(\delta_2)_m: C^\infty_m(M)\to \R$ are linearly independent by hypothesis, 
there exist an open subset $U_1\subseteq M$ with $m\in U_1$ and a function $f_1\in C^\infty(U_1)$   
%$f_1\in C^\infty_m(M)$
%$[f_1]\in C^\infty_m(M)$, 
satisfying 
with  
$(\delta_1)_m(f_1)=0$ 
%$((\delta_1)_{U_1}f_1)(m)=0$ 
and 
$(\delta_2)_m(f_1)\ne0$. 
%$((\delta_2)_{U_1}f_1)(m)\neq0$. 
Then for every open subset $U\subseteq M$ with $m\in U$ and every $f\in C^\infty(U)$ we obtain  
$$\begin{aligned}
\{f_1\vert_{U\cap U_1}, f\vert_{U\cap U_1}\}_{U\cap U_1}(m) 
&= ((\delta_2)_{U\cap U_1}(f_1\vert_{U\cap U_1}))(m)\cdot
((\delta_1)_{U\cap U_1}(f\vert_{U\cap U_1}))(m) \\
&=(\delta_2)_m(f_1)\cdot (\delta_1)_m(f)
\end{aligned}$$ 
hence  by \eqref{P-tensor-point} 
$$(\delta_1)_m(f)
=((\delta_1)_{U\cap U_1}(f\vert_{U\cap U_1}))(m)
%=\delta_1(f)(m)
%=\frac{1}{((\delta_2)_{U_1}f_1)(m)}
=\frac{1}{(\delta_2)_m(f_1)}
\Pi_m((f_1)'_m,f'_m)
%\Pi_m( (df_1)(m),df(m)) 
$$
and this shows that the operational tangent vector $(\delta_1)_m$ has order~1 at $m$.
One can similarly prove that the operational tangent vector $(\delta_2)_m$ has order~1 at $m$ and this completes the proof. 
% $\ker \delta_1\neq\ker\delta_2$
\end{proof}

One can use Theorem~\ref{thm} and Proposition~\ref{prop:lp} to construct queer Poisson brackets on $l^p$ spaces for $1\leq p\leq 2$. Again we will present the case $p=2$ in more detail.

\medskip

\begin{example}[concrete queer Poisson bracket]\label{queer_Poisson}
Now let us take $M = \H \times \R$. Denote points of $M$ as $(v,x)$. As the first operational vector field let us take $\delta_\ell$ from Example \ref{ex:ell} acting in $v$ variable, and for the second --- $\frac\partial{\partial x}$. They commute and thus by Theorem~\ref{thm} define a queer Poisson bracket on $\H\times\R$:
$$\{f,g\}(v,x):= \delta_\ell(v) f(\cdot,x) \frac{\partial g}{\partial x}(v,x) - \frac{\partial f}{\partial x}(v,x) \delta_\ell(v) g(\cdot,x).$$

Note that this Poisson bracket has pathological properties: it does not allow Hamiltonian formalism in the usual sense since its corresponding Hamiltonian vector fields are in general only operational vector fields, e.g. for the function $h(v,x)=-x$ is
$$X_h:=\{h,\cdot\} = \delta_\ell.$$
Obviously it is not a section of $TM$. Since in the constructed example $\delta_\ell$ was a differential operator of the second order, it will not lead to an evolution flow on $M$.
Note that the system of Hamilton equations
% \be \dot f = X_h f\ee
$$\frac{d}{dt} f(v(t),x(t)) = (X_h f)(v(t),x(t))$$
for $f\in C^\infty(M)$ is not even a well posed problem. Namely for the function  $\rho$ given by \eqref{rho} we get
\be \frac{d}{dt} \rho(v(t)) = 2.\ee
Now consider the function $f(v,x)=\langle v, w \rangle$ for a fixed vector $w\in\H$. One sees that $f''=0$ and thus $X_h f = 0$.
Since the vector $w$ was arbitrary, it follows that $\frac{d}{dt} v(t) = 0$. 
% The problem is a consequence of the lack of chain rule type of relation for $D_\ell$.

% \flushright$\diamond$
\end{example}

\medskip

As demonstrated a queer Poisson bracket does not lead to the dynamics in the usual way. However it may be possible to consider the dynamics not on the initial manifold but on some jet bundle or higher (co)-tangent bundle, see e.g. \cite{grabowski15} and references therein.

Taking this into account, from the point of view of applications in physics (including classical mechanics) one should explicitly assume the existence of Poisson tensor in the definition of Poisson Banach manifold. This also ensures the existence of the map $\sharp: T^*M \to T^{**}M$ defined by
\be \sharp(\mu_m) = \Pi_m(\mu_m,\cdot), \qquad \mu_m\in T^*_mM.\ee

\begin{defn}
 A {\bf Banach Poisson manifold} $(M,\pb)$ is a Banach manifold $M$ equipped with a 
 %conventional Poisson bracket 
 localizable Poisson bracket $\pb$ for which there exists a Poisson tensor and the corresponding map $\sharp$ satisfies
 \be \label{sharp}\sharp(T^*M)\subset TM.\ee
\end{defn}

%Note that 
This definition is a clarification of the definition of Banach Poisson manifolds given in \cite[Def. 2.1]{OR}, where the localizability property and the existence of Poisson tensor or $\sharp$ map were not explicitly assumed, but were assumed implicitly. In consequence all Banach Poisson manifolds considered there (including Banach Lie--Poisson spaces) do satisfy the corrected definition. 
%Moreover, the Poisson tensor of every Banach Lie-Poisson space is uniquely determined. (See Remark~\ref{7january2018}.)

The condition \eqref{sharp} on the map $\sharp$ was introduced in \cite{OR} and guarantees that Hamiltonian vector fields are kinematic and
it is equivalent to the bilinear functional $\Pi_m\colon T^*M\times T^*M\to\mathbb{R}$ being separately weak$^*$-continuous. 

\section*{Acknowledgements}
We wish to thank Tirthankar Bhattacharyya and Andreas Kriegl for helpful discussions. The work of the first-named author was supported by a grant of the Romanian National Authority for Scientific Research and Innovation, CNCS--UEFISCDI, project number PN-II-RU-TE-2014-4-0370. 
The work of the third-named author was supported by the Labex CEMPI (ANR-11-LABX-0007-01).

% \bibliographystyle{grzyby}
% \bibliography{../literatura}

\begin{thebibliography}{FHH$^+$01}
\bibitem[AMR02]{ratiu-mta3}
R.~Abraham, J.~E. Marsden, T.~S. Ratiu: Manifolds, Tensor Analysis, and
  Applications.
\newblock Springer-Verlag, Berlin-Heidelberg, third edition, 2002.

\bibitem[B\'00]{bona}
P.~B\'ona: Extended quantum mechanics.
\newblock \emph{Acta Physica Slovaca}, \textbf{50}:1--198, 2000.

\bibitem[BGG15]{grabowski15}
A.~J. Bruce, K.~Grabowska, J.~Grabowski: Higher order mechanics on graded
  bundles.
\newblock \emph{Journal of Physics A: Mathematical and Theoretical},
  \textbf{48}(20):205203, 2015.

\bibitem[CM74]{marsden-chernoff}
P.~R. Chernoff, J.~E. Marsden: Properties of Infinite Dimensional {H}amiltonian
  Systems, \emph{Lecture Notes in Mathematics}, volume 425.
\newblock Springer Verlag, 1974.

\bibitem[CP12]{pelletier}
P.~Cabau, F.~Pelletier: Almost {L}ie structures on an anchored {B}anach bundle.
\newblock \emph{J. Geom. Phys.}, \textbf{62}(11):2147--2169, 2012.

\bibitem[Dit05]{dito}
G.~Dito: Deformation quantization on a {H}ilbert space.
\newblock In Noncommutative Geometry and Physics (edited by Y.~Maeda,
  \emph{et~al.}), pages 139--157. World Scientific, Singapore, 2005.

\bibitem[Fad80]{fadeev}
L.~D. Fadeev: Hamiltonian interpretation of inverse-scattering method.
\newblock In Solitons (edited by R.~K. Bullough, P.~J. Caudrey).
  Springer-Verlag, 1980.

\bibitem[FHH$^+$01]{fabian2001}
M.~Fabian, P.~Habala, P.~H{\'a}jek, V.~M. Santaluc{\'\i}a, J.~Pelant,
  V.~Zizler: Functional analysis and infinite-dimensional geometry.
\newblock Springer Science \& Business Media, 2001.

\bibitem[Fil85]{Filippov}
V.~Filippov: $n$-{L}ie algebras.
\newblock \emph{Sibirsk. Mat. J.}, \textbf{26}(6):879--891, 1985.

\bibitem[Ida11]{ida2011}
C.~Ida: {L}ichnerowicz--{P}oisson cohomology and {B}anach {L}ie algebroids.
\newblock \emph{Ann. Funct. Anal}, \textbf{2}(2):130--137, 2011.

\bibitem[KM97]{michor}
A.~Kriegl, P.~W. Michor: The convenient setting of global analysis, volume~53.
\newblock American Mathematical Society, 1997.

\bibitem[OR03]{OR}
A.~Odzijewicz, T.~S. Ratiu: Banach {L}ie--{P}oisson spaces and reduction.
\newblock \emph{Comm. Math. Phys.}, \textbf{243}:1--54, 2003.

\bibitem[Pit36]{pitt1936}
H.~Pitt: A note on bilinear forms.
\newblock \emph{Journal of the London Mathematical Society},
  \textbf{1}(3):174--180, 1936.

\bibitem[Rya02]{ryan2002}
R.~A. Ryan: Introduction to tensor products of {B}anach spaces.
\newblock Springer Science \& Business Media, 2002.

\bibitem[WD73]{wells}
J.~C. Wells, C.~R. DePrima: Local automorphisms are differential operators on
  some {B}anach spaces.
\newblock \emph{Proceedings of the American Mathematical Society},
  \textbf{40}(2):453--457, 1973.

\end{thebibliography}

\end{document}